\theoremstyle{plain}
\newtheorem{thm}{Theorem}[section]
\newtheorem{prop}[thm]{Proposition}
\newtheorem{clly}[thm]{Corollary}
\newtheorem{lemma}[thm]{Lemma}
\newtheorem{defi}[thm]{Definition}
\newtheorem{maintheorem}{Theorem}
\newcommand{\re}{{\Bbb R}}
\newcommand{\nat}{{\Bbb N}}
\title{Finiteness of attractors and repellers on sectional hyperbolic sets}
\author{A. M. L\'opez B.
        \thanks{
{\em Key words and phrases}:
Attractor, Repeller, Maximal invariant, Sectional-Anosov flow.
This work is partially supported by CAPES, Brazil.}}
\date{}
\begin{document}
\maketitle

\begin{abstract}
We obtain an upper bound for the number of attractors and repellers
that can appear from small perturbations of a sectional hyperbolic set.
This extends results from \cite{a} and \cite{m}.
\end{abstract}


\section{Introduction}

\noindent
A {\em sectional hyperbolic set}  
is a is a partially hyperbolic set whose singularities
are hyperbolic and whose central subbundle is 
sectionally expanding.

The result \cite{m} asserts that for every sectional hyperbolic transitive attracting set
$\Lambda$ of a vector field $X$ on a compact $3$-manifold there are neighborhoods $\mathcal{U}$ of $X$ and $U$ of $\Lambda$
such that the number of attractors in $U$ of a vector field in $\mathcal{U}$ is less than
one plus the number of equilibria of $X$. 
This result was extended later in \cite{ams} by allowing $\Lambda$ to be an attracting set
contained in the nonwandering set (rather than transitive). An extension of \cite{m}
to higher dimensions was recently obtained in \cite{a}.
The present work removes both transitivity and nonwandering hypotheses in order to prove that
for every sectional hyperbolic set
$\Lambda$ of a vector field $X$ on a compact manifold there are neighborhoods $\mathcal{U}$ of $X$, $U$ of $\Lambda$ and a positive integer
$n_0$ such that the number of attractors in $U$ of a vector field in $\mathcal{U}$ is less than $n_0$.
Let us state our result in a precise way.

Consider a compact manifold $M$ of dimension $n\geq 3$ (a {\em compact $n$-manifold} for short)
with a Riemannian structure $\|\cdot\|$.
We denote by $\partial M$ the boundary of $M$.
Let ${\cal X}^1(M)$ be the space
of $C^1$ vector fields in $M$ endowed with the
$C^1$ topology.
Fix $X\in {\cal X}^1(M)$, inwardly
transverse to the boundary $\partial M$ and denotes
by $X_t$ the flow of $X$, $t\in I\!\! R$.
The {\em maximal invariant} set of $X$ is defined by 
$$
M(X)= \displaystyle\bigcap_{t \geq 0} X_t(M).
$$
Notice that $M(X)=M$ in the boundaryless case $\partial M=\emptyset$.
A subset $\Lambda$ is called {\em invariant} if
$X_t(\Lambda)=\Lambda$ for every $t\in I\!\! R$.
We denote by $m(L)$ the minimum norm of a linear
operator $L$, i.e., $m(L)= inf_{v \neq 0} \frac{\left\|Lv\right\|}{\left\|v\right\|} $.

\begin{defi}
\label{d2}
A compact invariant set
$\Lambda$ of $X$
is {\em partially hyperbolic}
if there is a continuous invariant
splitting $T_\Lambda M=E^s\oplus E^c$ 
such that the following properties
hold for some positive constants $C,\lambda$:

\begin{enumerate}
\item
$E^s$ is {\em contracting}, i.e., 
$\mid\mid DX_t(x) \left|_{E^s_x}\right. \mid\mid
\leq Ce^{-\lambda t}$, 
for all $x\in \Lambda$ and $t>0$.
\item
$E^s$ {\em dominates} $E^c$, i.e., 
$\frac{\mid\mid DX_t(x) \left|_{E^s_x}\right. \mid\mid}{m(DX_t(x) \left|_{E^c_x}\right. )}
\leq Ce^{-\lambda t}$, 
for all $x\in \Lambda$ and $t>0$.
\end{enumerate}

We say the central subbundle $E^c_x$ of $\Lambda$ is 
{\em sectionally expanding} if
$$dim(E^c_x) \geq 2 \quad 
and \quad 
\left| det(DX_t(x) \left|_{L_x}\right. ) \right| \geq C^{-1}e^{\lambda t}, \qquad
\forall x \in \Lambda \quad and \quad t > 0  $$ 
for all two-dimensional subspace $L_x$ of $E^c_x$.
Here $det(DX_t(x) \left|_{L_x}\right. )$ denotes
the jacobian of $DX_t(x)$ along $L_x$.
\end{defi}

Recall that a singularity of a vector field is hyperbolic if
the eigenvalues of its linear part
have non zero real part.

\begin{defi}
\label{shs}
A {\em sectional hyperbolic set}
is a partially hyperbolic set whose singularities (if any) are hyperbolic and whose central subbundle is sectionally expanding.
\end{defi}

The $\omega$-limit set of $p\in M$ is the set
$\omega_X(p)$ formed by those $q\in M$ such that $q=\lim_{n\infty}X_{t_n}(p)$ for some
sequence $t_n\to\infty$.
We say that $\Lambda\subset M$ is {\em transitive} if
$\Lambda=\omega_X(p)$ for some $p\in \Lambda$.
We say that $\Lambda$ is {\em singular} if it
contains a singularity; and
{\em attracting}
if $\Lambda=\cap_{t>0}X_t(U)$
for some compact neighborhood $U$ of $\Lambda$.
This neighborhood is called
{\em isolating block} of $\Lambda$.
It is well known that the isolating block $U$ can be chosen to be
positively invariant, namely $X_t(U)\subset U$ for all
$t>0$.
An {\em attractor} is a transitive attracting set.
A {\em repelling} is an attracting for the time reversed vector field $-X$ 
and a {\em repeller} is a transitive repelling set.

With these definitions we can state our main result.

\begin{maintheorem}
\label{thB}
For every sectional hyperbolic set $\Lambda$ of a vector field $X$ on a compact manifold there are
neighborhoods ${\cal U}$ of $X$, 
$U$ of $\Lambda$ and $n_0\in \mathbb{N}$ such that
\begin{center}
$\#\{L\subset U:$ $L$ is an attractor or repeller of $Y\in {\cal U}\} \leq n_0$.
\end{center}
\end{maintheorem}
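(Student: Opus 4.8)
The plan is to first fix a compact neighborhood $U$ of $\Lambda$ and a $C^1$-neighborhood $\mathcal{U}$ of $X$ small enough that, for every $Y\in\mathcal{U}$, the maximal invariant set $\Lambda_Y=\bigcap_{t\in\mathbb{R}}Y_t(U)$ is sectional hyperbolic for $Y$ with hyperbolicity constants $C,\lambda$ independent of $Y$. This robustness is the standard consequence of the cone-field characterization of Definition~\ref{shs}: the contraction of $E^s$, the domination, and the sectional expansion of $E^c$ are encoded by forward-invariant cone conditions on a neighborhood of $\Lambda$, and such conditions are $C^1$-open and persist under small perturbation. Shrinking $U$ first guarantees that the singularities of $X$ in $U$ all lie in a sectional hyperbolic set, hence are hyperbolic, finite in number, and (being isolated) persist as unique continuations without new ones appearing, giving a uniform bound $s_0$ on $\#(\operatorname{Sing}(Y)\cap U)$ for $Y\in\mathcal{U}$. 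Note that any attractor or repeller $L\subset U$ of such a $Y$ is $Y$-invariant and contained in $\Lambda_Y$, hence inherits the splitting $E^s\oplus E^c$.

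I would then exploit the time asymmetry of sectional hyperbolicity to eliminate repellers. Suppose $L\subset U$ is a repeller of $Y$ lying in a component of $\Lambda_Y$ on which $\dim E^s\ge 1$. Fix a regular $p\in L$; uniform contraction of $E^s$ gives a local strong stable manifold $W^{ss}_{\mathrm{loc}}(p)$ of dimension $\dim E^s\ge1$, so it contains points $q\neq p$ arbitrarily close to $p$. As a repeller, $L$ admits a negatively invariant isolating block $V$, so the backward orbit of $q$ stays in $V$; and since $q\in W^{ss}_{\mathrm{loc}}(p)$, its forward orbit stays in a thin tube about the orbit of $p\subset L\subset V$, hence also in $V$. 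Thus the entire orbit of $q$ lies in $V$, forcing $q\in L$, a contradiction. Consequently, when $\dim E^s\ge1$ there are no repellers and it suffices to bound the attractors; the complementary case $\dim E^s=0$ is symmetric, handled by applying the same argument to $-Y$ with the roles of attractor and repeller exchanged. So assume $\dim E^s\ge1$ and count attractors.

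Each attractor $L\subset U$ is singular or nonsingular. Distinct attractors being pairwise disjoint, each singularity lies in at most one of them, so the number of singular attractors is at most $s_0$. For a nonsingular $L$ I would invoke the standard fact that a nonsingular sectional hyperbolic set is hyperbolic: with $Y$ bounded away from $0$ on $L$, the flow direction splits off inside $E^c$ and $E^c/\langle Y\rangle$ is uniformly expanded, yielding a hyperbolic splitting $E^s\oplus\langle Y\rangle\oplus E^u$ with $s=\dim E^s\ge1$ and $u=\dim E^u=\dim E^c-1\ge1$. Hence every nonsingular attractor is a genuine hyperbolic attractor, and its local invariant manifolds have sizes bounded below by a uniform $\rho>0$ coming from the uniform hyperbolicity over $\mathcal{U}$.

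Finally I would bound the nonsingular attractors by a packing argument. For each such $L$, flow a local unstable disk $W^{u}_{\mathrm{loc}}(p)\subset L$ about a regular point for a short time to obtain a plaque $P\subset L$ of dimension $u+1=\dim M-s$ and $(\dim M-s)$-volume at least $c\rho^{\dim M-s}$; saturating $P$ by the local strong stable manifolds (dimension $s$) produces a set $N_L=\bigcup_{p\in P}W^{ss}_{\mathrm{loc}}(p)$ of full dimension $\dim M$, whose volume is at least $c'\rho^{\dim M}$ by the uniform lower bound on the angle between $E^s$ and $E^c$ furnished by domination. If $q\in N_L\cap N_{L'}$ for distinct attractors, then $\omega_Y(q)\subset L\cap L'=\emptyset$, which is absurd; thus the $N_L$ are pairwise disjoint subsets of a fixed compact neighborhood $K$, and the number of nonsingular attractors is at most $\operatorname{vol}(K)/(c'\rho^{\dim M})=:r_0$. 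Taking $n_0=s_0+r_0$ finishes the count. The main obstacle is the uniformity built into the first step: establishing that $C,\lambda$ and the manifold size $\rho$ can be chosen independently of $Y\in\mathcal{U}$, since both the nonexistence of repellers and the packing bound $r_0$ rest entirely on these uniform estimates.
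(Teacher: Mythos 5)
Your proposal has two genuine gaps, one of which is fatal. The repeller-elimination step rests on a non-contradiction. You show that for a repeller $L$ and $p\in L$, every $q\in W^{ss}_{\mathrm{loc}}(p)$ has its full orbit in the repelling block $V$, hence $q\in L$ --- but this is not absurd: it is a defining property of repelling sets. A repelling set of $Y$ is an attracting set of $-Y$, and attracting sets contain the unstable manifolds of their points; the $(-Y)$-unstable manifold of $p$ is exactly $W^{ss}_Y(p)$. So $W^{ss}_{\mathrm{loc}}(p)\subset L$ always holds, and saddle-type hyperbolic repellers with $\dim E^s\geq 1$ genuinely exist inside sectional hyperbolic sets (take $\Lambda=M$ for a non-transitive Anosov flow: the whole manifold is sectional hyperbolic and contains a proper hyperbolic repeller). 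Hence repellers cannot be dismissed; they must be counted. The paper in fact uses the very inclusion $W^{ss}_{Y^n}(x,\epsilon)\subset R^n$ that you mistake for a contradiction as a positive tool (Lemma \ref{le3}): combined with $\Lambda\cap W^{ss}_X(\sigma)=\{\sigma\}$ (Theorem \ref{t1}) and Proposition \ref{Ladilla1}, it forces repellers to stay uniformly away from $\si(X)$, after which they live in the genuinely hyperbolic set $H=\bigcap_{t\in\re}X_t(U\setminus B_{\delta/2}(\si(X)))$ and are counted by the homoclinic-class finiteness lemma (Lemma \ref{le2}).

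The second gap is in the packing bound for nonsingular attractors. The disjoint-basin volume argument is sound in principle, but it needs a uniform lower bound $\rho$ on the size of local unstable plaques, and this is precisely what fails: while the constants for the sectional splitting $E^s\oplus E^c$ are uniform over $U$ and over $\mathcal{U}$, the hyperbolic splitting $E^s\oplus\langle Y\rangle\oplus E^u$ of a nonsingular invariant set degenerates as that set approaches a singularity (the flow direction collapses), so nonsingular attractors accumulating on $\si(X)$ have no uniform $\rho$ and your bound $r_0$ does not exist a priori. This is the hard case of the theorem, and the paper devotes Proposition \ref{le4} to it: using singular-cross sections $\Sigma^t,\Sigma^b$ near a Lorenz-like singularity and the projection $\Pi^t$, it shows that an attractor entering $\Sigma^{t,\Delta_0}$ must meet $l^t\subset W^s(\sigma)$ and therefore contain $\sigma$, contradicting nonsingularity; attractors bounded away from $\si(X)$ are then handled, as for repellers, inside the hyperbolic set $H$. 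You would need to either reproduce an argument of this type or otherwise rule out accumulation of attractors and repellers on the singularities before any uniform-size packing or homoclinic-class count can be invoked.
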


To finish we state a direct corollary of our result.
Recall that a {\em sectional-Anosov flow} is a vector field whose maximal invariant set is sectional hyperbolic \cite{mem}.

\begin{clly}
\label{cor1}
For every sectional-Anosov flow
of a compact manifold there are
a neighborhood $ {\cal U}$ and $n_0 \in \mathbb{N}$ such that
\begin{center}
$\#\{$ $L$ is an attractor or repeller of $Y\in {\cal U}\} \leq n_0$.
\end{center}\end{clly}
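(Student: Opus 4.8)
The plan is to reduce Corollary \ref{cor1} to Theorem \ref{thB} by applying the latter to the maximal invariant set itself. If $X$ is a sectional-Anosov flow on a compact manifold $M$, then by definition $\Lambda:=M(X)=\bigcap_{t\geq 0}X_t(M)$ is a sectional hyperbolic set, so Theorem \ref{thB} supplies neighborhoods ${\cal U}_0$ of $X$, $U$ of $M(X)$, and an integer $n_0$ such that any $Y\in{\cal U}_0$ has at most $n_0$ attractors and repellers \emph{contained in $U$}. The only thing separating this from the corollary is the localization condition $L\subset U$: the corollary counts all attractors and repellers of $Y$, regardless of where they sit in $M$.

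So the real task is to show that, after shrinking the neighborhood of $X$, every attractor and every repeller of a nearby field is automatically contained in $U$. First I would note that any compact invariant set $L$ of $Y$ satisfies $L=Y_t(L)\subset Y_t(M)$ for all $t\geq 0$, hence $L\subset M(Y)=\bigcap_{t\geq 0}Y_t(M)$; this applies to attractors and repellers alike, since both are invariant. Since $X$ is inwardly transverse to $\partial M$ and this is a $C^1$-open condition, $M(Y)$ is well defined for all $Y$ in a neighborhood of $X$, and it remains only to confine $M(Y)$ inside $U$.

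The main obstacle is therefore the upper semicontinuity of the maximal invariant set on the trapping region $M$: I claim there is a neighborhood ${\cal U}\subset{\cal U}_0$ of $X$ with $M(Y)\subset U$ for every $Y\in{\cal U}$. I would prove this by contradiction, taking $Y_k\to X$ and points $p_k\in M(Y_k)\setminus U$, then passing to a subsequence $p_k\to p$; since the closed set $M\setminus\operatorname{int}U$ contains every $p_k$ it also contains $p$, so $p\notin\operatorname{int}U$. For each fixed $t\geq 0$ the backward orbit point $(Y_k)_{-t}(p_k)$ lies in the closed set $M$ and converges to $X_{-t}(p)$, whence $X_{-t}(p)\in M$ for all $t\geq 0$, i.e. $p\in M(X)\subset\operatorname{int}U$, contradicting $p\notin\operatorname{int}U$. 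With such a ${\cal U}$ in hand, any attractor or repeller $L$ of $Y\in{\cal U}$ satisfies $L\subset M(Y)\subset U$ and is thus counted by Theorem \ref{thB}, yielding the desired bound $n_0$.
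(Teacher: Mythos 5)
Your proposal is correct and follows the route the paper intends: Corollary \ref{cor1} is stated there as a direct consequence of Theorem \ref{thB} applied to $\Lambda=M(X)$, with no further argument given, and you apply the theorem in exactly that way. The one step the paper leaves implicit --- that every attractor or repeller of a nearby $Y$ lies in $M(Y)$ and that $M(Y)\subset U$ for $Y$ close to $X$, by upper semicontinuity of the maximal invariant set --- is supplied correctly by your compactness argument, so the global count follows from the local one.
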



\section{Proof}
\label{sech}

\noindent
An useful property of sectional hyperbolic sets
is given below.

\begin{lemma}
\label{l1}
Let $X$ be a $C^1$ vector field
of a compact $n$-manifold $M$, $n\geq3$, $X \in {\cal X}^1(M)$. 
Let $\Lambda \subset M$ be a sectional hyperbolic set of $X$.
Then, there is a neighborhood $ {\cal U} \subset {\cal X}^1(M)$ of $X$ 
and a neighborhood $U \subset M$ of $\Lambda$ such that if $Y \in {\cal U}$,
every nonempty, compact, non singular, invariant set $H$
of $Y$ in $U$ is hyperbolic {\em saddle-type} (i.e. $E^s\neq 0$ and $E^u\neq 0$).
\end{lemma}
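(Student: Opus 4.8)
The plan is to propagate the sectional-hyperbolic structure of $\Lambda$ to a full dominated splitting with sectionally expanding centre on every compact invariant set lying in a suitable neighbourhood, robustly under $C^1$ perturbation, and then to upgrade this splitting to genuine hyperbolicity precisely on the non-singular pieces. First I would make the structure robust and local. Uniform contraction of $E^s$ together with its domination of $E^c$ are equivalent to the existence of a continuous field of stable cones that is strictly invariant under the time-$t_0$ map $DX_{t_0}$ for a single fixed $t_0>0$, with the contraction and the spectral gap realized in that one time; likewise sectional expansion is the finite-time estimate $|\det(DX_{t_0}(x)|_{L})|\ge\sigma>1$ for every $2$-plane $L$ in the complementary (centre) cone. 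Since these are open, finite-time, pointwise conditions, I can extend the cone fields continuously to a neighbourhood $U$ of $\Lambda$ and, after shrinking $U$ and choosing a $C^1$-neighbourhood $\mathcal U$ of $X$, guarantee that the same cone invariance and the same Jacobian estimate hold for the time-$t_0$ map of every $Y\in\mathcal U$ on $U$. Consequently every nonempty compact invariant $H\subset U$ of $Y\in\mathcal U$ carries a dominated splitting $T_HM=E^s\oplus E^c$ with $E^s$ contracting, $\dim E^c\ge2$, and $E^c$ sectionally expanding, with constants uniform in $Y$ and $H$.

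Now fix such an $H$ and assume it is non-singular, so that $\inf_{x\in H}\|Y(x)\|>0$ while $\|Y\|$ stays bounded on $H$. The first structural step is that the flow line field $\langle Y\rangle$ lies inside $E^c$. Writing $Y(x)=v^s+v^c$ with $v^s\in E^s_x$ and $v^c\in E^c_x$, I would run the cocycle backward: forward contraction of $E^s$ is backward expansion, so if $v^s\neq0$ the $E^s$-component of $DY_{-t}(x)Y(x)=Y(Y_{-t}(x))$ grows like $e^{\lambda t}$; as $E^s$ and $E^c$ meet at a uniformly positive angle on the compact set $H$, this forces $\|Y(Y_{-t}(x))\|\to\infty$, contradicting boundedness. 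Hence $v^s=0$ and $\langle Y\rangle\subset E^c$.

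The main obstacle, and the heart of the argument, is to convert sectional expansion of the $2$-planes of $E^c$ into uniform expansion transverse to the flow. Consider the quotient bundle $\mathcal N=E^c/\langle Y\rangle$ (equivalently the orthogonal complement of $Y$ inside $E^c$) with the induced cocycle $\tilde P_t$, so $\dim\mathcal N=\dim E^c-1\ge1$. For a $2$-plane $L=\langle Y(x)\rangle\oplus\langle w\rangle\subset E^c_x$ with $w\perp Y(x)$, the Jacobian factorizes, up to uniformly bounded angle factors, as the product of the flow expansion $\|Y(Y_t x)\|/\|Y(x)\|$ along the neutral direction and the expansion of $\tilde P_t$ on the class of $w$. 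Since the flow factor is bounded above and below on $H$, the sectional estimate yields $m(\tilde P_t)\ge K^{-1}e^{\lambda t}$ for a uniform $K$, i.e. $\tilde P_t$ expands every vector of $\mathcal N$ exponentially. Because $\langle Y\rangle$ is invariant and neutral while its quotient expands with a uniform gap, this is exactly a dominated (spectral-gap) situation inside $E^c$, so a standard invariant-complement/graph-transform argument produces a continuous $DY_t$-invariant subbundle $E^u\subset E^c$ transverse to $\langle Y\rangle$ on which $DY_t$ is uniformly expanding, with $\dim E^u=\dim\mathcal N\ge1$.

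Assembling the pieces, $T_HM=E^s\oplus\langle Y\rangle\oplus E^u$ is a continuous $DY_t$-invariant splitting with $E^s$ uniformly contracting and $E^u$ uniformly expanding, which is precisely hyperbolicity of $H$ as a non-singular flow-invariant set. Here $E^u\neq0$ because $\dim E^c\ge2$, and $E^s\neq0$ because the contracting subbundle of the partially hyperbolic splitting is nontrivial; hence $H$ is hyperbolic of saddle type. I expect the two delicate points to be the uniformity of all constants in $Y$ and $H$ in the robustness step, and the factorization together with the uniform control of the angles between $\langle Y\rangle$, $E^u$, and $E^s$ on the compact non-singular set $H$ in the final step.
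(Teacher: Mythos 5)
The paper does not actually prove this lemma: its ``proof'' is a one-line citation of \cite{mpp2}, which establishes the three-dimensional, singular-hyperbolic version of the statement. Your argument is therefore not being measured against a written proof in the paper but is a self-contained reconstruction of the standard argument from the literature, and as a sketch it is correct. The three ingredients you use --- (i) robustness of the stable/centre cone fields and of the finite-time sectional Jacobian estimate on a neighbourhood $U$ of $\Lambda$ and a $C^1$-neighbourhood $\mathcal{U}$ of $X$, so that every compact invariant set in $U$ of every $Y\in\mathcal{U}$ inherits the dominated, sectionally expanding splitting with uniform constants; (ii) the backward-expansion argument forcing $\langle Y\rangle\subset E^c$ on a compact non-singular invariant set; and (iii) the factorization of the $2$-plane Jacobian into the bounded flow factor times the linear Poincar\'e cocycle on $E^c/\langle Y\rangle$, followed by a graph transform to peel off a continuous invariant uniformly expanding complement $E^u$ of $\langle Y\rangle$ in $E^c$ --- are exactly those used in the higher-dimensional extensions of \cite{mpp2} (cf.\ \cite{mem}, \cite{lec}, \cite{a}). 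Step (iii) is usually phrased in the literature as an appeal to Doering's criterion (hyperbolicity of the linear Poincar\'e flow over a non-singular compact invariant set implies hyperbolicity of the set, \cite{d}); your graph-transform construction of $E^u$ is a correct direct proof of the special case needed here. Two details worth making explicit: the Jacobian factorization is exact, with no angle correction, if one realizes $E^c/\langle Y\rangle$ as the orthogonal complement of $Y$ inside $E^c$, since the area of the parallelogram spanned by $Y(x)$ and $w$ equals $\|Y(x)\|$ times the length of the component of $w$ orthogonal to $Y(x)$; and the final claim $E^s\neq 0$ rests on the convention, implicit in Definition~\ref{d2}, that both summands of a partially hyperbolic splitting are nontrivial.
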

\begin{proof}
See (\cite{mpp2}).
\end{proof}

This following theorem examinating the sectional hyperbolic 
splitting $T_{\Lambda}M = E^s_{\Lambda} \oplus E^c_{\Lambda}$
of a sectional hyperbolic set $\Lambda$ of $X \in {\cal X}^1(M)$
appears in \cite{lec}.

\begin{thm}
\label{t1}
Let $X$ be a $C^1$ vector field
of a compact $n$-manifold $M$, $n\geq3$, $X \in {\cal X}^1(M)$. 
Let $\Lambda \subset M$ be a sectional hyperbolic set of $X$.
If $\sigma\in Sing(X)\cap \Lambda$, then
$\Lambda \cap W^{ss}_X(\sigma)=\{\sigma\}.$
\end{thm}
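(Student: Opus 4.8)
The plan is to argue by contradiction, playing two facts against each other: along a regular orbit the flow direction is \emph{central}, whereas the strong stable manifold is tangent to the \emph{contracting} bundle, and these two bundles meet only at the origin. Two preliminary facts about the splitting $T_\Lambda M=E^s\oplus E^c$ are needed first. (i) For every $x\in\Lambda$ one has $X(x)\in E^c_x$: writing $X(x)=X^s(x)+X^c(x)$ along $E^s_x\oplus E^c_x$, invariance of the splitting gives $X^s(X_t(x))=DX_t(x)X^s(x)$, and applying the contraction estimate for $E^s$ in reverse time yields $\|X^s(X_{-t}(x))\|\ge C^{-1}e^{\lambda t}\|X^s(x)\|$ for $t>0$; since the projection onto $E^s$ is uniformly bounded on the compact set $\Lambda$ and $\|X\|$ is bounded there, $\|X^s\|$ is bounded on $\Lambda$, forcing $X^s(x)=0$. (ii) At $\sigma$ the contracting bundle $E^s_\sigma$ is the strong stable subspace of the hyperbolic singularity, so $W^{ss}_X(\sigma)$ is the invariant $C^1$ manifold tangent at $\sigma$ to $E^s_\sigma$; moreover $E^s_\sigma\cap E^c_\sigma=\{0\}$, since the splitting is a direct sum.

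Now suppose, seeking a contradiction, that some $p\in\Lambda\cap W^{ss}_X(\sigma)$ satisfies $p\neq\sigma$. First I would observe that $p$ is regular: any point of $W^{ss}_X(\sigma)\setminus\{\sigma\}$ satisfies $X_t(p)\to\sigma$ as $t\to+\infty$, which no singularity other than $\sigma$ can do. Hence $X(p)\neq0$ and, by (i), $X(p)\in E^c_p$. Since both $\Lambda$ and $W^{ss}_X(\sigma)$ are invariant, the forward orbit stays in their intersection and $X_t(p)\to\sigma$. The flow direction along this orbit, $X(X_t(p))=DX_t(p)X(p)$, then carries two competing descriptions: being tangent to the orbit it is tangent to $W^{ss}_X(\sigma)$, and by invariance of the splitting it lies in $E^c_{X_t(p)}$.

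To conclude I would normalize $v_t=X(X_t(p))/\|X(X_t(p))\|$, which is legitimate since $X_t(p)$ is regular. By (ii) and the $C^1$ regularity of $W^{ss}_X(\sigma)$, tangency forces every accumulation point of $(v_t)$ as $t\to+\infty$ into $T_\sigma W^{ss}_X(\sigma)=E^s_\sigma$; by continuity of $E^c$ over $\Lambda$ together with $X_t(p)\to\sigma$, membership in $E^c_{X_t(p)}$ forces every accumulation point into $E^c_\sigma$. The accumulation points thus lie in $E^s_\sigma\cap E^c_\sigma=\{0\}$, contradicting $\|v_t\|=1$; so no such $p$ exists and $\Lambda\cap W^{ss}_X(\sigma)=\{\sigma\}$. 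I expect the delicate step to be this last passage to the limit: one must justify that tangency of the orbit to $W^{ss}_X(\sigma)$ near $\sigma$ really drives the \emph{directions} $v_t$ into $E^s_\sigma$, which rests on the convergence $T_qW^{ss}_X(\sigma)\to E^s_\sigma$ as $q\to\sigma$ along $W^{ss}_X(\sigma)$ and on the identification $T_\sigma W^{ss}_X(\sigma)=E^s_\sigma$ from (ii). Once both descriptions of $v_t$ are in hand, the contradiction is immediate from directness of the splitting.
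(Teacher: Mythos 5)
The paper offers no proof of Theorem \ref{t1} at all: it is imported from the lecture notes of Bautista and Morales (\cite{lec}), so there is nothing in the text to compare your argument against line by line. Your proof is correct and is, in substance, the standard argument from that literature. The two pillars are exactly the right ones: the flow direction over $\Lambda$ lies in $E^c$ (your backward-iteration boundedness argument for the $E^s$-component $X^s$, using invariance and compactness of $\Lambda$ and the uniform boundedness of the projections, is the usual proof of this), while the velocity of the orbit of $p$ is tangent to $W^{ss}_X(\sigma)$, whose tangent spaces converge to $E^s_\sigma$; since $E^s_\sigma\cap E^c_\sigma=\{0\}$, the normalized velocities $v_t$ can have no accumulation point on the unit sphere, which is absurd. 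Two small points deserve to be made explicit in a write-up. First, the convergence $T_qW^{ss}_X(\sigma)\to E^s_\sigma$ is a statement about the \emph{local} strong stable manifold, so you should record that $X_t(p)$ lies in $W^{ss}_{loc}(\sigma)$ for all large $t$ (some $X_T(p)$ enters the positively invariant local manifold, and then stays); this ensures the $C^1$ convergence of tangent planes applies to the orbit points themselves and not merely to points of the immersed global manifold that happen to be metrically close to $\sigma$. Second, you are implicitly taking $W^{ss}_X(\sigma)$ to mean the invariant manifold tangent to the contracting subbundle $E^s_\sigma$ of the partially hyperbolic splitting, not the full stable set of $\sigma$ (which is what the paper's set-theoretic definition of $W^{ss}_X(p)$ would literally give at a singularity); that is indeed the intended meaning here --- the statement would be false for the full stable manifold of a Lorenz-like singularity --- but it merits a sentence. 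With those two clarifications the argument is complete and self-contained, which is arguably an improvement over the paper's bare citation.
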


We use it to prove the following.

\begin{prop}
\label{Ladilla1}
Let $X$ be a $C^1$ vector field
of a compact $n$-manifold $M$, $n\geq3$, $X \in {\cal X}^1(M)$. 
Let $\Lambda \subset M$ be a sectional hyperbolic set of $X$.
Let $\sigma$ be a singularity of $X$ in $\Lambda$.
Then, for every isolating block $U$ of $\Lambda$, there is a neighborhood $V$
of $W^{ss}(\sigma) \setminus \left\{ \sigma \right\}$ in $U$
such that
$$\left(\cap_{t>0}Y_t(U)\right)\cap V= \emptyset,$$
for every $C^1$ vector field $Y$
close to $X$.
\end{prop}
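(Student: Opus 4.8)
The plan is to exploit Theorem~\ref{t1}, which says that the strong stable manifold $W^{ss}_X(\sigma)$ meets $\Lambda$ only at $\sigma$ itself. The idea is that points on $W^{ss}(\sigma)\setminus\{\sigma\}$ are driven toward $\sigma$ in forward time along the strongly contracting directions, so their forward orbits cannot remain in the maximal invariant set of any nearby field: if they did, they would have to accumulate on a set that, for the unperturbed field, intersects $\Lambda$ away from $\sigma$, contradicting Theorem~\ref{t1}. Concretely, I would argue by contradiction and by a compactness/limiting argument in both the base point and the perturbation parameter.

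First I would set up the contradiction. Suppose the conclusion fails for some fixed isolating block $U$. Then for every neighborhood $V$ of $W^{ss}(\sigma)\setminus\{\sigma\}$ in $U$ and every $C^1$-neighborhood of $X$ there is a field $Y$ close to $X$ and a point $x\in V\cap\left(\cap_{t>0}Y_t(U)\right)$. Taking a sequence of shrinking neighborhoods $V_n$ and fields $Y_n\to X$, I obtain points $x_n\in V_n$ with $x_n\in\cap_{t>0}(Y_n)_t(U)$, so that the full $Y_n$-orbit of $x_n$ stays in $U$. By passing to a subsequence, $x_n\to x_\infty$ with $x_\infty\in\closure\left(W^{ss}(\sigma)\setminus\{\sigma\}\right)$. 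The delicate point is controlling where $x_\infty$ lies: it could be $\sigma$, a genuine point of $W^{ss}(\sigma)\setminus\{\sigma\}$, or some limit on the boundary of the local strong stable manifold inside $U$.

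The key step is to use forward invariance to produce a point of $\Lambda$ off $\sigma$ on the strong stable manifold, contradicting Theorem~\ref{t1}. Since each $x_n$ has its entire forward $Y_n$-orbit in the positively invariant block $U$, and since $Y_n\to X$ in the $C^1$ topology, a standard orbit-continuity (Arzel\`a--Ascoli) argument on compact time intervals shows that limits of these orbits are genuine $X$-orbits contained in $U$, hence in $M(X|_U)$ and therefore in $\Lambda$ (using that $\Lambda=\cap_{t>0}X_t(U)$ is the maximal invariant set in its isolating block). The crucial geometric input is that a point $x_\infty$ lying on $W^{ss}(\sigma)\setminus\{\sigma\}$ has a forward $X$-orbit converging to $\sigma$ but, by the local product/normally hyperbolic structure, its limit set inside $U$ produces a point of $\Lambda\cap W^{ss}_X(\sigma)$ distinct from $\sigma$; I would make this precise by tracking how the strong stable foliation persists under the perturbation and by invoking Lemma~\ref{l1} to guarantee that any nonsingular invariant limit set in $U$ is hyperbolic of saddle type, so it cannot be absorbed into the purely contracting strong stable direction.

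The main obstacle I anticipate is the case $x_\infty=\sigma$: when the points $x_n$ approach the strong stable manifold near the singularity, the limiting orbit may degenerate to the equilibrium, and the contradiction with Theorem~\ref{t1} evaporates. To handle this I would work with a uniform tubular neighborhood of $W^{ss}(\sigma)\setminus\{\sigma\}$ that stays a definite distance from $\sigma$ — that is, intersect $U$ with the complement of a small ball around $\sigma$ — and use hyperbolicity of $\sigma$ (which persists under small $C^1$ perturbations) together with the strong stable foliation to show that an orbit entering this annular region and staying in $U$ must, after a bounded return time, land near a point of $W^{ss}(\sigma)\setminus\{\sigma\}$ bounded away from $\sigma$. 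Passing to the limit then yields the forbidden point of $\Lambda\cap W^{ss}_X(\sigma)\setminus\{\sigma\}$. Making the ``bounded away from $\sigma$'' estimate uniform in the perturbation $Y$ is the technical heart of the argument, and I expect it to rest on the continuity of the local invariant manifolds $W^{ss}_Y(\sigma_Y)$ in $Y$ and on the domination in Definition~\ref{d2}, items~1 and~2.
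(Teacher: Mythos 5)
Your target --- producing a point of $\Lambda\cap W^{ss}_X(\sigma)\setminus\{\sigma\}$ to contradict Theorem~\ref{t1} --- is the right one, but the contradiction-by-limits route leaves two gaps open, both of which you flag and neither of which your sketch closes. First, your diagonal argument only yields $x_\infty\in \cl\left(W^{ss}(\sigma)\setminus\{\sigma\}\right)$, while Theorem~\ref{t1} controls $\Lambda\cap W^{ss}(\sigma)$, not $\Lambda$ intersected with the \emph{closure} of the (merely immersed, generally non-closed) strong stable manifold; the ``frontier'' case you mention in passing is never dealt with. Second, and more seriously, the fix for the case $x_\infty=\sigma$ does not work as described: a \emph{uniform} tubular neighborhood of $W^{ss}(\sigma)\setminus\{\sigma\}$ necessarily contains a punctured ball around $\sigma$, hence points of $W^u_{loc}(\sigma)\setminus\{\sigma\}$, and those points \emph{do} belong to $\cap_{t>0}Y_t(U)$ (their backward orbits stay near $\sigma$). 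So the neighborhood $V$ in the proposition must pinch at $\sigma$, whereas the set you actually control --- a tube intersected with the complement of a ball around $\sigma$ --- is not a neighborhood of $W^{ss}(\sigma)\setminus\{\sigma\}$ at all. What remains, namely controlling backward $Y$-orbits inside the pinched region near $\sigma$ uniformly in $Y$, is the entire difficulty; ``bounded return time'' plus ``continuity of invariant manifolds'' names the estimate but does not prove it. (Also, the appeal to Lemma~\ref{l1} about saddle-type limit sets is a detour: once $x_\infty\in\Lambda\cap W^{ss}_X(\sigma)\setminus\{\sigma\}$ you are already done.)

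The paper's proof sidesteps all of this with a direct finite-time argument. Theorem~\ref{t1} is used to fix, once and for all, a (pinched) neighborhood $V$ of $W^{ss}(\sigma)\setminus\{\sigma\}$ with $\Lambda\cap V=\emptyset$. Since $U$ is an isolating block, the sets $X_t(U)$ decrease to $\Lambda$, so there is a single finite $T>0$ with $X_T(U)\cap V=\emptyset$. This is a finite-time, hence open, condition on the vector field, so $Y_T(U)\cap V=\emptyset$ for all $Y$ close to $X$, and the conclusion follows from $\cap_{t>0}Y_t(U)\subset Y_T(U)$. The uniformity in $Y$ that your compactness argument labors to extract comes for free from the single time $T$. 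To repair your proposal you should either adopt this finite-time step (which makes the limiting argument unnecessary) or carry out in detail the local hyperbolic analysis at $\sigma$ inside the pinched part of $V$, together with an argument excluding limit points on the frontier of $W^{ss}(\sigma)$.
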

\begin{proof}
The equality in Theorem
\ref{t1} implies
that the negative
orbit of every point in
$W^{ss}_X(\sigma)\setminus \{\sigma\}$ leaves
$\Lambda$. 
Hence we can arrange neighborhood $V$ containing  
$W^{ss}(\sigma) \setminus \left\{ \sigma \right\}$ 
and such that
$$\Lambda \cap V= \emptyset$$
Since
$U$ is the isolating block of
$\Lambda$ we can find $T>0$ such that
$$X_T(U)\cap V =\emptyset.$$
Hence
$$Y_T(U)\cap V=\emptyset,$$
for all $C^r$ vector field
close to $X$.
The result follows
since
$\cap_{t>0}Y_t(U)\subset Y_T(U)$.
\end{proof}
\bigskip


Next we recall the standard definition of hyperbolic set.

\begin{defi}
\label{hyperbolic}
A compact invariant set $\Lambda$ of $X$ is {\em
hyperbolic}
if there are a continuous tangent bundle
invariant decomposition
$T_{\Lambda}M=E^s\oplus E^X\oplus E^u$ and positive constants
$C,\lambda$ such that

\begin{itemize}
\item $E^X$ is the vector field's
direction over $\Lambda$.
\item $E^s$ is {\em contracting}, i.e.,
$
\mid\mid DX_t(x) \left|_{E^s_x}\right.\mid\mid
\leq Ce^{-\lambda t}$, 
for all $x \in \Lambda$ and $t>0$.
\item $E^u$ is {\em expanding}, i.e.,
$
\mid\mid DX_{-t}(x) \left|_{E^u_x}\right.\mid\mid
\leq Ce^{-\lambda t},
$
for all $x\in \Lambda$ and $t> 0$.
\end{itemize}
A closed orbit is hyperbolic if it is also hyperbolic, as a compact invariant set. An attractor is hyperbolic if it is also a hyperbolic
set. 
\end{defi}

It follows from the stable manifold theory \cite{hps} that if $p$ belongs to a hyperbolic set $\Lambda$, then the following sets

\begin{tabular}{lll}
$W^{ss}_X(p)$ & = & $\{x:d(X_t(x),X_t(p))\to 0, t\to \infty\},$ \\
$W^{uu}_X(p)$ & = & $\{x:d(X_t(x),X_t(p))\to 0, t\to -\infty\},$ \\
\end{tabular}\\
are $C^1$ immersed submanifolds of $M$ which are tangent at $p$ to the subspaces $E^s_p$ and $E^u_p$ of $T_pM$ respectively.
Similarly,

\begin{tabular}{lll}
$W^{s}_X(p)$ & = & $ \bigcup_{t\in I\!\! R}W^{ss}_X(X_t(p))$, \\
$W^{u}_X(p)$ & = & $ \bigcup_{t\in I\!\! R}W^{uu}_X(X_t(p))$. \\
\end{tabular}\\
are also $C^1$ immersed submanifolds tangent to $E^s_p\oplus E^X_p$ and $E^X_p\oplus E^u_p$ at $p$ respectively.
Moreover, for every $\epsilon>0$ we have that

\begin{tabular}{lll}
$W^{ss}_X(p,\epsilon)$ & = & $\{x:d(X_t(x),X_t(p))\leq\epsilon, \forall t\geq 0\},$ and, \\
$W^{uu}_X(p,\epsilon)$ & = & $\{x:d(X_t(x),X_t(p))\leq \epsilon, \forall t\leq 0\}$\\
\end{tabular}\\
are closed neighborhoods of $p$ in $W^{ss}_X(p)$ and $W^{uu}_X(p)$ respectively.\\

Let $O = \left\{X_t (x): t \in \re \right\}$ be the orbit of $X$ through $x$,
then the stable and unstable manifolds of $O$ defined by
\begin{center}
	$W^s(O) =\cup_{x\in O} W^{ss}(x)$, and $W^u(O) =\cup_{x\in O} W^{uu}(x)$
\end{center}
are $C^1$ submanifolds tangent to the subbundles $E^s_{\Lambda} \oplus E^X_{\Lambda}$ 
and $E^X_{\Lambda} \oplus E^u_{\Lambda}$ respectively.

A {\em homoclinic orbit} of a hyperbolic periodic orbit $O$ is an orbit in $\gamma \subset W^s(O) \cap W^u(O)$.
If additionally $T_qM = T_qW^s(O) + T_qW^u(O)$ for some (and hence all) point $q \in \gamma$, then we say that
$\gamma$ is a {\em transverse homoclinic orbit} of $O$.  

\begin{defi}
The {\em homoclinic class} $H(O)$ of a hyperbolic periodic orbit $O$ is
the closure of the union of the transverse homoclinic orbits of $O$.
We say that an invariant set $L$ is a {\em homoclinic class} 
if $L = H(O)$ for some hyperbolic periodic orbit $O$.
\end{defi}

We denote by:\\ 
\begin{tabular}{l}
$Sing(X)$ the set of singularities of $X$.\\
$Cl(A)$ the closure of $A$, $A\subset M$.\\
If $\delta>0$, $B_\delta(A)=\{x\in M:d(x,A)<\delta\}$, 
where $d(\cdot,\cdot)$ is the metric in $M$.\\
\end{tabular}\\

\begin{lemma}
\label{le2}
Let $X$ be a $C^1$ vector field
of a compact $n$-manifold $M$, $X \in {\cal X}^1(M)$. 
Let $\Lambda \in M$ be a hyperbolic set of $X$.
Then, there is a neighborhood $ {\cal U} \subset {\cal X}^1(M)$ of $X$, 
a neighborhood $U \subset M$ of $\Lambda$ and $n_0 \in \mathbb{N} $ such that
\begin{center}
	 $\#\{L\subset U:$ $L$ is homoclinic class of $Y\in {\cal U}\} \leq n_0$
\end{center}
for every vector field $Y \in {\cal U}$.
\end{lemma}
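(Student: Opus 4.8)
The plan is to reduce the count of homoclinic classes to the count of basic sets of a locally maximal hyperbolic set, and then to make that count uniform by structural stability.

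First I would fix an isolating neighborhood. Since $\Lambda$ is hyperbolic, its hyperbolic splitting extends to a continuous, eventually invariant pair of stable/unstable cone fields on some neighborhood $U_0$ of $\Lambda$; as cone-field hyperbolicity is a $C^0$-robust open condition, there are a compact neighborhood $U \subset U_0$ of $\Lambda$ and a $C^1$-neighborhood ${\cal U}$ of $X$ such that, for every $Y \in {\cal U}$, the maximal invariant set
$$
\Lambda_Y \;=\; \bigcap_{t \in \re} Y_t(U)
$$
is a hyperbolic set. By construction $\Lambda_Y$ is the largest invariant set of $Y$ contained in $U$, so it is locally maximal with isolating block $U$. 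Observe that if $L \subset U$ is any homoclinic class of $Y$, then $L$ is compact and invariant, whence $Y_t(L)=L\subset U$ for all $t$ and therefore $L \subset \Lambda_Y$. Thus it suffices to bound the number of homoclinic classes contained in the locally maximal hyperbolic set $\Lambda_Y$.

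Next I would apply the Spectral Decomposition Theorem to $\Lambda_Y$: the nonwandering set of $Y|_{\Lambda_Y}$ splits as a finite disjoint union of basic sets
$$
\Omega(Y|_{\Lambda_Y}) \;=\; \Lambda_1(Y)\sqcup\cdots\sqcup\Lambda_{k(Y)}(Y),
$$
each $\Lambda_i(Y)$ being compact, invariant, transitive, and relatively open and closed in $\Omega(Y|_{\Lambda_Y})$. I claim every homoclinic class $L=H(O)\subset\Lambda_Y$ coincides with one of these basic sets. Indeed, $L$ is transitive, hence connected and contained in $\Omega(Y|_{\Lambda_Y})$; since the $\Lambda_i(Y)$ are pairwise disjoint and relatively clopen, $L$ lies in a single $\Lambda_i(Y)$. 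As the periodic orbit $O\subset L\subset\Lambda_i(Y)$, and in a basic set the homoclinic class of any of its periodic orbits is the whole piece, we conclude $L=\Lambda_i(Y)$. Consequently the number of homoclinic classes of $Y$ in $U$ is at most $k(Y)$. It then remains to bound $k(Y)$ independently of $Y$, and here I would invoke the structural stability of the locally maximal hyperbolic set $\Lambda_X$: after shrinking ${\cal U}$, for each $Y\in{\cal U}$ there is a homeomorphism $h_Y\colon\Lambda_X\to\Lambda_Y$ carrying $X$-orbits onto $Y$-orbits with preserved orientation. Such an orbit equivalence sends the nonwandering set of $X|_{\Lambda_X}$ onto that of $Y|_{\Lambda_Y}$ and respects the decomposition into transitive relatively clopen pieces, so it maps basic sets bijectively; hence $k(Y)=k(X)$ for all $Y\in{\cal U}$. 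Setting $n_0=k(X)$ finishes the proof.

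The hard part will be the last step, the uniform control of $k(Y)$: a priori a single basic set could break into several transitive pieces under perturbation, so one must rule out an unbounded proliferation of basic sets. This is exactly what local maximality buys us, since without it structural stability fails and the count can be unstable; for this reason the reduction to $\Lambda_Y=\bigcap_{t}Y_t(U)$, which is automatically locally maximal, in the first step is essential rather than cosmetic.
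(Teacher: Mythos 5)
Your argument is correct, but it takes a genuinely different route from the paper. You reduce the problem to the locally maximal hyperbolic continuation $\Lambda_Y=\bigcap_{t}Y_t(U)$, invoke the spectral decomposition of $\Omega(Y|_{\Lambda_Y})$ into finitely many basic sets, identify each homoclinic class in $U$ with one of these basic sets, and then use structural stability of locally maximal hyperbolic sets to get $k(Y)=k(X)$, so that $n_0=k(X)$. The paper instead runs a direct pigeonhole argument: by persistence of hyperbolicity it fixes a uniform size $\epsilon$ for the local invariant manifolds $W^{ss}_Y(x,\epsilon)$, $W^{uu}_Y(x,\epsilon)$ of every hyperbolic set of every $Y\in{\cal U}$ in $U$, covers $\cl(U)$ by $n_0$ balls of radius $\eta/2$ with $\eta<\epsilon/2$, and observes that two homoclinic classes having periodic points in the same ball must coincide, because the uniform local product structure forces those periodic points to be homoclinically related; hence at most $n_0$ classes. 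The trade-off is clear: your proof leans on heavier standard machinery (spectral decomposition, the fact that a basic set is the homoclinic class of any of its periodic orbits, and orbit-equivalence of continuations of isolated hyperbolic sets) but yields the sharp count $n_0=k(X)$, and you correctly identify local maximality of $\Lambda_Y$ as the point where stability of the count is secured; the paper's proof is more elementary and self-contained, needing only uniform stable/unstable manifolds plus density of periodic orbits, at the cost of a much cruder bound (a covering number of $\cl(U)$). One small imprecision on your side worth fixing: transitivity of $L$ does not by itself give connectedness in general, but you do not actually need connectedness --- invariance of $L$ together with the fact that the basic sets are pairwise disjoint, compact and invariant, and that $L$ has a dense orbit, already confines $L$ to a single piece.
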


\begin{proof}
By the stability of hyperbolic sets we can fix a neighborhood 
$U \subset M$ of $\Lambda$, a neighborhood ${\cal U} \subset {\cal X}^1(M)$ 
of $X$ and and $\epsilon>0$ such that every hyperbolic set $H\subset U$ of 
every $Y\in \mathcal{U}$ satisfies that 
\begin{equation}
	\begin{tabular}{l}
		$W^{ss}_Y(x,\epsilon)$, $W^{uu}_Y(x,\epsilon)$ have uniform size $\epsilon$
		for all $x \in H$\\
	\end{tabular}
\label{El2}
\end{equation}

By contradiction, we suppose that
there exists a sequence of vector fields $X^n\in\mathcal{U}$ converging to $X$ such that
\begin{center}
	 $\#\{L\subset U:$ $L$ is homoclinic class of $X^n\} \geq n$
\end{center}

It is well known \cite{haka} that the periodic orbits 
are dense in $L^n \subset \Lambda^n=\Lambda_{X^n}$, for all $n \in \nat$. 
Moreover, these homoclinic classes are pairwise disjoint. 

Let $\epsilon>0$ be the uniform size by (\ref{El2}), and let $\eta>0$ be 
such that $0< \eta < \frac{\epsilon}{2}$.

Since $U$ is neighborhood of $\Lambda$, $Cl(U)$ is compact 
neighborhood of $\Lambda$, then we can cover $Cl(U)$ 
with a finite number of balls with radius $\frac{\eta}{2}$.
We denote this finite number by $n_0$.

Thus, if two periodic points $p_1,p_2 \in L$ satisfies $d(p_1,p_2)< \eta$, then 
\begin{equation}
W^{ss}_X(p_1,\epsilon) \cap W^{uu}_X(p_2,\epsilon) \neq \emptyset
\label{ch}
\end{equation}

Therefor, for every vector field $X^{N}$ with $N>n_0$, 
we have that there are homoclinic classes 
$L^i,L^j$ of $X^{N}$ in $Cl(U)$ 
contained in the same ball with radius $\frac{\eta}{2}$, $1\geq i < j \geq N$.

Since $L^i$ and $L^j$ are homoclinic classes, there are periodic points 
$p^i$ and $p^j$ of $L^i$ and $L^j$ respectively satisfying (\ref{ch}), 
then $p^i$ and $p^j$ belongs to the same homoclinic class and this imply $L^i = L^j$.
Thus, the sequence $(L^n)_{n \in \nat}$, is constant for $n$ enough large. 
This is a contradiction and the proof follows.
\end{proof}

\begin{lemma}
\label{le3}
Let $X$ be a $C^1$ vector field
of a compact $n$-manifold $M$, $n\geq3$, $X \in {\cal X}^1(M)$. 
Let $\Lambda \in M$ be a sectional hyperbolic set of $X$.
Let $Y^n$ be a sequence of vector fields
converging to $X$ in the $C^1$ topology.
There is a neighborhood $U \subset M$ of $\Lambda$, 
such that if $R^n$ is a repeller of $Y^n$, $R^n \subset \cap_{t>0}Y_t^n(U)$ 
for each $n \in \mathbb{N}$, then the sequence $(R^n)_{n \in \nat}$ of repellers do 
not accumulate on the singularities of $X$, i.e., 
$$Sing(X) \bigcap Cl(\cup_{n \in \nat} R^n) = \emptyset $$
\end{lemma}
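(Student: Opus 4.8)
The plan is to argue by contradiction, combining Proposition \ref{Ladilla1} with the characterization of a repeller as an attracting set for the reversed flow and the continuous dependence of strong stable manifolds on the base point and on the vector field.

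First I would fix the neighborhood $U$. Since the singularities of $X$ lying in $\Lambda$ are hyperbolic, hence isolated, no singularity of $X$ outside $\Lambda$ can accumulate on $\Lambda$: a limit $p\in\Lambda$ of such singularities would satisfy $X(p)=0$, giving a non-isolated singularity in $\Lambda$. Thus I may choose the isolating block $U$ so small that $Sing(X)\cap Cl(U)=Sing(X)\cap\Lambda$ and so that, by robustness of the partially hyperbolic splitting, every invariant set of $Y$ contained in $U$ (with $Y$ close to $X$) carries the continuation of the contracting bundle $E^s$ with uniform contraction; in particular the local strong stable manifolds $W^{ss}_Y(x,\epsilon)$ have a uniform size $\epsilon>0$ for all such $x$, by the stable manifold theorem applied to the robustly contracting bundle. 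Now suppose the conclusion of Lemma \ref{le3} fails, so that some $\sigma\in Sing(X)$ lies in $Cl(\cup_n R^n)\subset Cl(U)$. By the choice of $U$ we get $\sigma\in\Lambda$, so $\sigma$ is a singularity of $X$ in $\Lambda$ and Proposition \ref{Ladilla1} applies to it; passing to a subsequence I obtain points $x_n\in R^n$ with $x_n\to\sigma$.

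Next I would exploit both defining properties of $R^n$. On one hand, Proposition \ref{Ladilla1} provides a neighborhood $V$ of $W^{ss}(\sigma)\setminus\{\sigma\}$ in $U$ with $(\cap_{t>0}Y^n_t(U))\cap V=\emptyset$; since $R^n\subset\cap_{t>0}Y^n_t(U)$ by hypothesis, this gives $R^n\cap V=\emptyset$ for every $n$. On the other hand, $R^n$ is a repeller of $Y^n$, i.e. an attracting set of $-Y^n$, and for the reversed flow the strong unstable manifold at $x_n$ is exactly $W^{ss}_{Y^n}(x_n)$. A standard argument shows that an attracting set contains the strong unstable manifolds of its points: if $y\in W^{ss}_{Y^n}(x_n,\epsilon)$, then the negative $Y^n$-orbit of $y$ shadows that of $x_n\in R^n$ and hence remains in a compact neighborhood that is positively invariant for $-Y^n$, forcing $y\in R^n$. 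Therefore $W^{ss}_{Y^n}(x_n,\epsilon)\subset R^n$, a disk of uniform radius $\epsilon$ through $x_n$ tangent to $E^s_{Y^n}(x_n)$.

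Finally I would pass to the limit. As $n\to\infty$ we have $x_n\to\sigma$, $Y^n\to X$ and $E^s_{Y^n}(x_n)\to E^s_X(\sigma)$, so by continuous dependence of the local strong stable manifolds the disks $W^{ss}_{Y^n}(x_n,\epsilon)$ converge in the $C^1$ sense to the strong stable disk $W^{ss}_X(\sigma,\epsilon)$, which has radius $\epsilon>0$ and hence contains points of $W^{ss}(\sigma)\setminus\{\sigma\}$; these points lie in the interior of $V$. Consequently $W^{ss}_{Y^n}(x_n,\epsilon)\cap V\neq\emptyset$ for all large $n$, contradicting $R^n\cap V=\emptyset$. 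I expect the main obstacle to be precisely this last step together with the uniform size claim: one must guarantee that the contracting bundle $E^s$ and the strong stable disks integrating it vary continuously all the way down to $\sigma$ and have radius bounded below independently of $n$, even though the base points $x_n$ approach an equilibrium where the flow direction degenerates. This is exactly where the robustness of the sectional-hyperbolic splitting and the uniform estimates of the stable manifold theorem are essential.
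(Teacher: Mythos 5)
Your proposal is correct and follows essentially the same route as the paper: contradiction via Proposition \ref{Ladilla1} to get a forbidden neighborhood $V$ of $W^{ss}(\sigma)\setminus\{\sigma\}$, the inclusion $W^{ss}_{Y^n}(x_n,\epsilon)\subset R^n$ from the repeller property, and $C^1$ convergence of the strong stable disks to $W^{ss}_X(\sigma,\epsilon)$ to force $R^n\cap V\neq\emptyset$. You actually justify the inclusion $W^{ss}_{Y^n}(x_n,\epsilon)\subset R^n$ and the membership $\sigma\in\Lambda$ more carefully than the paper does, which simply asserts them.
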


\begin{proof}
Let $\sigma \in Sing(X)$ and we denote $\Lambda_Y=\cap_{t>0}Y_t^n(U)$. 
Fix the neighborhood $U$ of $\Lambda$ as in Lemma \ref{l1} and thus
we can assume that $U$ is an isolating block of $\Lambda$. 
Assume by contradiction that 
$$Sing(X) \bigcap Cl(\cup_{n \in \nat} R^n) \neq \emptyset. $$
Then, exist a sequence $(x_n)_{n\in\nat}$, with $x_n \in R^n \subset \Lambda_Y$, for all 
$n \in \nat$, and such that $x_n \longrightarrow \sigma$. 

Since $\Lambda$ is sectional hyperbolic set, we have (by Theorem \ref{t1}) that 
$\Lambda \cap W^{ss}_X(\sigma) = \left\{ \sigma \right\}$, 
and as $Y^n \longrightarrow X$ 
(by Proposition \ref{Ladilla1}), there is a neighborhood $V$
of $W^{ss}(\sigma) \setminus \left\{ \sigma \right\}$ in $M$
such that $\Lambda_Y\cap V= \emptyset$,
for $n \in \nat$ large enough.
 
As $x_n \longrightarrow \sigma$, for $\epsilon>0$ uniform size, 
$W^{ss}_{Y^n}(x^n, \epsilon) \longrightarrow W^{ss}_{X}(\sigma, \epsilon)$ 
in the sense $C^1$ manifolds \cite{pt}.

Then, for $n \in \nat$ enough large, 
$W^{ss}_{Y^n}(x^n, \epsilon) \cap V \neq \emptyset$.
Note that $W^{ss}_{Y^n}(x^n, \epsilon) \subset W^{ss}_{Y^n}(x^n) \subset R^n$, 
since is repeller of $Y^n$.
Hence $R^n \cap V \neq \emptyset$, then $\Lambda_Y \cap V \neq \emptyset$.
This is a contradiction.
\end{proof}



Let $M$ be a compact $n$-manifold, $n \geq 3$.
Fix $X\in {\cal X}^1(M)$, inwardly
transverse to the boundary $\partial M$. We denotes
by $X_t$ the flow of $X$, $t\in I\!\! R$.\\

There is also a stable manifold theorem in the case when $\Lambda$ is sectional hyperbolic set.
Indeed, denoting by $T_{\Lambda}M=E^s_{\Lambda}\oplus E^c_{\Lambda}$ the corresponding the sectional hyperbolic
splitting over $\Lambda$ we have from \cite{hps} that
the contracting subbundle $E^s_{\Lambda}$
can be extended to a contracting subbundle $E^s_U$ in $M$. Moreover, such 
an extension is tangent to a continuous foliation denoted by $W^{ss}$ (or $W^{ss}_X$ to indicate dependence on $X$).
By adding the flow direction to $W^{ss}$ we obtain a continuous foliation 
$W^s$ (or $W^s_X$) now tangent to $E^s_U\oplus E^X_U$.
Unlike the Anosov case $W^s$ may have singularities, all of which being
the leaves $W^{ss}(\sigma)$ passing through the singularities $\sigma$ of $X$.
Note that $W^s$ is transverse to $\partial M$
because it contains the flow direction (which is transverse to $\partial M$
by definition).

It turns out that every singularity $\sigma$ of a sectional hyperbolic set 
$\Lambda$ satisfies $W^{ss}_X(\sigma)\subset W^s_X(\sigma)$.
Furthermore, there are two possibilities for such a singularity, namely,
either $dim(W^{ss}_X(\sigma))=dim(W^s_X(\sigma))$ (and so $W^{ss}_X(\sigma)=W^s_X(\sigma)$) or $dim(W^{s}_X(\sigma))=dim(W^{ss}_X(\sigma))+1$.
In the later case we call it Lorenz-like according to the following definition. 

\begin{defi}
\label{ll} 
We say that a singularity $\sigma$ of a sectional-Anosov flow $X$ is {\em Lorenz-like}
if $dim (W^s(\sigma))=dim (W^{ss}(\sigma))+1.$
\end{defi}

Let $\sigma$ be a singularity Lorenz-like of a sectional hyperbolic set $\Lambda$. 
We will denote $dim(W^{ss}_X(\sigma))=s$ and 
$dim(W^{u}_X(\sigma))=u$, 
therefore $\sigma$ has a $(s+1)$-dimensional 
local stable manifold $W^s_X(\sigma)$. 
Moreover $W^{ss}_X(\sigma)$ separates $W^s_{loc}(\sigma)$
in two connected components denoted by $W^{s,t}_{loc}(\sigma)$ 
and $W^{s,b}_{loc}(\sigma)$ respectively.

\begin{defi}
\label{n1}
A {\em singular-cross section} of a Lorenz-like singularity 
$\sigma$ will be a pair of submanifolds $\Sigma^t, \Sigma^b$, where 
$\Sigma^t, \Sigma^b$ are cross sections and;\\

\begin{tabular}{l}
$\Sigma^t$ is transversal to $W^{s,t}_{loc}(\sigma)$. \\
$\Sigma^b$ is transversal to $W^{s,b}_{loc}(\sigma)$.\\ 
\end{tabular}\\

Note that every singular-cross section
contains a pair singular submanifolds $l^t,l^b$ 
defined as the intersection of the local stable manifold of $\sigma$ with $\Sigma^t,\Sigma^b$ 
respectively.

Also note that $dim(l^*)=dim(W^{ss}(\sigma))$.\\

If $*=t,b$ then $\Sigma^*$ is a {\em hypercube of dimension $(n-1)$},   
i.e., diffeomorphic to $B^u[0,1] \times B^{ss}[0,1]$, with 
$B^u[0,1] \approx I^u$, $B^{ss}[0,1] \approx I^s$, $I^k=[-1,1]^k$, $k \in \mathbb{Z}$ and  where:\\

\begin{tabular}{l}
$B^{ss}[0,1]$ is a ball centered at zero and radius $1$ contained in $\re^{dim(W^{ss}(\sigma))}=\re^{s}$\\
$B^{u}[0,1]$ is a ball centered at zero and radius $1$ contained in $\re^{dim(W^{u}(\sigma))}=\re^{n-s-1}$
\end{tabular}\\

Let $f: B^u[0,1] \times B^{ss}[0,1] \longrightarrow \Sigma^*$ be the diffeomorphism, 
where $$f(\left\{0\right\} \times B^{ss}[0,1])=l^*$$ and $\left\{0\right\}=0 \in \re^u$.
Hence, we denoted the boundary of $\Sigma^*$ for $\partial \Sigma^*$,
and $\partial \Sigma^* = \partial^h \Sigma^* \cup \partial^v \Sigma^*$ such that\\

\begin{tabular}{l}
$\partial^h \Sigma^* = \left\{\right.$the union of the boundary submanifolds
 which are transverse to $l^*$ $\left.\right\}$\\
$\partial^v \Sigma^* = \left\{\right.$ the union of the
boundary submanifolds which are parallel to $l^*$ $\left.\right\}.$
\end{tabular}\\

Moreover,
$$\partial^h \Sigma^*  =  (I^u \times [\cup_{j=0}^{s-1} (I^j \times \left\{-1\right\} \times I^{s-j-1})])
\bigcup (I^u \times[\cup_{j=0}^{s-1} (I^j \times \left\{1\right\} \times I^{s-j-1})])$$
$$\partial^v \Sigma^*  =  ([\cup_{j=0}^{u-1} (I^j \times \left\{-1\right\} \times I^{u-j-1})] \times I^s)
\bigcup ([\cup_{j=0}^{u-1} (I^j \times \left\{1\right\} \times I^{u-j-1})] \times I^s)$$
and where $I^0 \times I=I$.
\end{defi}

Hereafter we denote $\Sigma^* = B^u[0,1] \times B^{ss}[0,1]$.


\begin{prop}
\label{le4}
Let $X$ be a $C^1$ vector field
of a compact $n$-manifold $M$, $n\geq3$, $X \in {\cal X}^1(M)$. 
Let $\Lambda \subset M$ be a sectional hyperbolic set of $X$.
Then, there are neighborhoods ${\cal U}$ of $X$, 
$U$ of $\Lambda$ and $n_0\in \mathbb{N}$ such that 
\begin{center}
$\#\{A\subset U:$ $A$ is an attractor of $Y\in {\cal U}\} \leq n_0$.
\end{center}
\end{prop}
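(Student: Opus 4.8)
The plan is to bound the singular and the non-singular attractors separately and then add the two bounds. First I would fix the data. Using Lemma \ref{l1} I choose an isolating block $U$ of $\Lambda$ and a neighborhood $\mathcal{U}$ of $X$ so that every nonempty, compact, non-singular, invariant set of every $Y\in\mathcal{U}$ contained in $U$ is hyperbolic of saddle type. Shrinking $\mathcal{U}$ and $U$ if necessary I also arrange, exactly as in the proof of Lemma \ref{le2}, a uniform size $\epsilon>0$ for the local manifolds $W^{ss}_Y(x,\epsilon)$ and $W^{uu}_Y(x,\epsilon)$ of the points $x$ of any such non-singular invariant set. Finally, since the singularities of $X$ in $\Lambda$ are hyperbolic and hence finite in number, say $\sigma_1,\dots,\sigma_k$, I use their robustness (hyperbolic continuation) to shrink $\mathcal{U}$ and $U$ further so that every $Y\in\mathcal{U}$ has exactly $k$ singularities in $\mathrm{Cl}(U)$, one near each $\sigma_i$.

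For the singular attractors the argument is short. Distinct attractors are disjoint, and a singular attractor contains at least one of the (at most $k$) singularities of $Y$ in $U$; hence the assignment of a singularity to each singular attractor is injective, and there are at most $k$ singular attractors.

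For the non-singular attractors I would reproduce the counting scheme of Lemma \ref{le2}. Let $A$ be a non-singular attractor of $Y\in\mathcal{U}$ in $U$. Being nonempty, compact, non-singular and invariant, $A$ is hyperbolic by Lemma \ref{l1}; being transitive and locally maximal (it is the maximal invariant set of its isolating block), its periodic orbits are dense and it is the homoclinic class of any of them. Cover $\mathrm{Cl}(U)$ by $n_1$ balls of radius $\eta/2$ with $0<\eta<\epsilon/2$. If two periodic points $p_1\in A_1$ and $p_2\in A_2$ of non-singular attractors lie within distance $\eta$, the uniform local manifolds meet: $W^{ss}_Y(p_1,\epsilon)\cap W^{uu}_Y(p_2,\epsilon)\neq\emptyset$. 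Any point $z$ in this intersection satisfies $\omega_Y(z)\subset A_1$ and, since $W^{uu}_Y(p_2)\subset A_2$ for the attractor $A_2$, also $z\in A_2$, so $\emptyset\neq\omega_Y(z)\subset A_1\cap A_2$ and hence $A_1=A_2$ by disjointness. Thus each ball contains periodic points of at most one non-singular attractor, and there are at most $n_1$ of them. Taking $n_0=n_1+k$ finishes the proof.

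I expect the main obstacle to be the non-singular case, and precisely the uniformity it requires: one must guarantee that all non-singular attractors of all $Y\in\mathcal{U}$ are hyperbolic with a common local-manifold size $\epsilon$, even though such an attractor may a priori approach a singularity of $Y$. Unlike the repeller situation controlled by Theorem \ref{t1} and Lemma \ref{le3}, here there is no $W^{ss}$-obstruction keeping an attractor away from a singularity, so the argument cannot be dualized and the uniform hyperbolicity must come entirely from Lemma \ref{l1}. The remaining care lies in the standard but essential facts that distinct attractors are disjoint and that $W^{uu}_Y(p)\subset A$ whenever $p$ lies in an attractor $A$, since it is exactly these that let a single strong stable/unstable intersection collapse two attractors into one.
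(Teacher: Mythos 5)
Your treatment of the singular attractors is fine and matches the paper's quick dismissal of that case (each singular attractor contains one of the finitely many continuations of the singularities of $X$, and distinct attractors are disjoint). The gap is exactly where you predicted it: the uniform size $\epsilon$ for the local manifolds of \emph{all} non-singular attractors of \emph{all} $Y\in{\cal U}$. Lemma \ref{l1} gives you that each nonempty compact non-singular invariant set in $U$ is hyperbolic, but it does not give uniform hyperbolicity constants over all such sets: the constants for a set $H$ degenerate as $H$ approaches a singularity of $Y$ (the flow direction collapses, and the splitting $E^s\oplus E^X\oplus E^u$ of $H$ loses uniform transversality), so the sizes of $W^{ss}_Y(x,\epsilon)$ and $W^{uu}_Y(x,\epsilon)$ cannot be bounded below over a family of non-singular attractors that accumulates on a singularity. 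In Lemma \ref{le2} the uniform $\epsilon$ comes from the structural stability of the \emph{hyperbolic} set $\Lambda$; here $\Lambda$ is only sectional hyperbolic and contains singularities, so that argument is not available. Since a non-singular attractor of $Y$ may a priori pass arbitrarily close to a singularity, your covering argument does not close.

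The paper resolves precisely this point with a dichotomy you are missing. If the (non-singular) attractors of the perturbed fields stay uniformly away from $\si(X)$, they are all contained in the compact set $H=\cap_{t\in\re}X_t\bigl(U\setminus B_{\delta/2}(\si(X))\bigr)$, which is hyperbolic by Lemma \ref{l1}; there Lemma \ref{le2} applies and gives the uniform bound — this is essentially your covering argument, but run inside a single compact hyperbolic set where the uniform $\epsilon$ is legitimate. If instead the attractors accumulate on a singularity $\sigma$, the paper takes a singular-cross section $\Sigma^t,\Sigma^b$ at $\sigma$ and shows, by projecting a compact piece of $W^u_{X^{n_1}}(p)\subset A^{n_1}$ along $\Pi^t$ and minimizing the distance to $l^t\subset W^s(\sigma)$, that some attractor must meet $l^t$ and hence contain $\sigma$, contradicting non-singularity. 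You would need to supply this second half (or some substitute for it) before your count of non-singular attractors is justified.
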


\begin{proof}
The proof is by contradiction, i.e., suppose that 
for $n \in \nat$, we have that for all neighborhood ${\cal U}$ of $X$, 
exists $Y\in {\cal U}$ such that 
\begin{center}
	 $\#\{A\subset U:$ $A$ is an attractor of $Y\in {\cal U}\} \geq n$.
\end{center}
Then, there is a sequence of vectors fields $(X^n)_n \in \nat$, 
such that $X^n \stackrel{C^1}{\rightarrow} X$, and a sequence $(A^n)_n \in \nat$, 
where $A^n$ is an attractor of vector field $X^n$, for all $n$. 
By compactness we can suppose that the attractors are non-singular, since the 
singularities are isolated. 
Fix the neighborhood $U$ of $\Lambda$ as in Lemma \ref{l1} and thus
we can assume that $U$ is an isolating block of $\Lambda$.

We claim that the sequence $(A^n)_{n \in \nat}$ of attractors 
accumulate on the singularities of $X$, otherwise 
$Sing(X) \bigcap Cl(\cup_{n \in \nat} A^n) = \emptyset$, 
then, there is $\delta>0$, such that 
$B_\delta(Sing(X))\bigcap\left(\cup_{n\in I\!\! N}A^n\right)=\emptyset.$	

Thus, in the same way as in \cite{a}, we define
\begin{equation}
H=\cap_{t\in I\!\! R}X_t\left(U\setminus B_{\delta/2}(Sing(X))\right)
\label{bola}
\end{equation}

By definition $Sing(X)\cap H=\emptyset$, $H$ is compact since $\Lambda$ is, 
and $H$ is a nonempty compact set \cite{a}, which is clearly invariant for $X$.
It follows that $H$ is hyperbolic by Lemma \ref{l1} and by Lemma \ref{le2} there is 
$n_0 \in \nat$ such that the sequence of attractors is bounded by $n_0$, that is a contradiction.\\

Then, the sequence $(A^n)_{n \in \nat}$ of attractors 
accumulate on the singularities of $X$, i.e., $Sing(X) \bigcap Cl(\cup_{n \in \nat} A^n) \neq \emptyset$. 
Thus, exists $\sigma\in U$ such that 
$\sigma\in Sing(X)\bigcap Cl\left(\cup_{n\in I\!\! N}A^n\right).$

The subbundle $E^s$ of $\Lambda$ extends to a contracting invariant subbundle on the whole
$U$ and we take a continuous (not necessarily invariant) extension of $E^c$ in $U$. We have that 
this extension persists by small perturbations of $X$ \cite{hps} and we denote the 
splitting by $E^{s,n} \oplus E^{c,n}$, where $E^{s,0} \oplus E^{c,0} = E^s \oplus E^c$.
We can assume that $\sigma(X^n)=\sigma$ and $l^t\cup l^b\subset W^s_{X^n}(\sigma)$
for all $n$.

As before we fix a coordinate system $(x,y)=(x^*,y^*)$
in $\Sigma^*$ with ($*=t,b$) and such that 
$\Sigma^*= B^u[0,1] \times B^{ss}[0,1]$ and $l^*=\{0\}\times B^{ss}[0,1]$ 
with respect to $(x,y)$.

Denote by $\Pi^*:\Sigma^*\to B^u[0,1]$ the projection, where $\Pi^*(x,y)=x$ and 
for $\Delta>0$ we define $\Sigma^{*,\Delta}= B^u[0,\Delta] \times B^{ss}[0,1].$

Then, by Theorem \ref{t1} we have that $\Lambda \cap W^{ss}_X(\sigma)=\{\sigma\}$ and by 
Lemma \ref{l1} $A^n$ is a hyperbolic attractor of type saddle
of $X^n$ for all $n$. Then by \cite{a} 
for every isolating block $U$ of $\Lambda$ we can choose 
$\Sigma^t,\Sigma^b$, singular-cross section for $\sigma$ in $U$ such that
\begin{equation}
\label{int}
(\cap_{t>0}X^n_t(U))\cap\left(\partial^h\Sigma^t\cup\partial^h\Sigma^b\right)=\emptyset
\end{equation} 
and we have that there is $n_1$ such that $A^{n_1}\cap int(\Sigma^{*,\Delta_0})\neq\emptyset$.

We shall assume that $A^{n_1}\cap int(\Sigma^{t,\Delta_0})\neq\emptyset$
(Analogous proof for the case $*=b$).
By (\ref{int}) we have $A^{n_1}\cap \partial^h\Sigma^{t,\Delta_0}=\emptyset.$
and by compactness we have that there is $p\in \Sigma^{t,\Delta_0} \cap A^{n_1}$ such that
\begin{center}
$ dist(\Pi^t(\Sigma^{t,\Delta_0}\cap A^{n_1}),0)=dist(\Pi^t(p),0),$	
\end{center}
where $dist$ denotes the distance in $B^u[0,\Delta_0]$. 
Note that $dist(\Pi^t(p),0)$ is the minimum distance of
$\Pi^t(\Sigma^{t,\Delta_0} \cap A^{n_1})$ to $0$ in $B^u[0,\Delta_0]$.\\

As $W^u_{X^{n_0}}(p) \subset A^{n_1}$, since $A^{n_1}$ is attractor, 
we have that $W^u_{X^{n_1}}(p)\cap \Sigma^{t,\Delta_0}$ contains some compact manifold $K^{n_1}$. 

We have that
$K^{n_1}$ is {\em transverse} to $\Pi^t$
(i.e. $K^{n_1}$ is transverse to the curves
$(\Pi^t)^{-1}(c)$, for every $c \in B^u[0,\Delta_0]$)(See \cite{d},\cite{a}). 
First we denote $\Pi^t(K^{n_1})=K^{n_1}_1$ 
the image of $K^{n_0}$ by the projection $\Pi^t$ in $B^u[0,\Delta_0]$. 
Note that $K^{n_1}_1\subset B^u[0,\Delta_0]$ and 
$\Pi^t(p)\in int(K^{n_1}_1)$.

Since $dim(K^{n_1}_1)=dim(B^u[0,\Delta_0])=(n-s-1)$,
there is  $z_0\in K^{n_0}$ such that
\begin{center}
	$dist(\Pi^t(z_0),0)<dist(\Pi^t(p),0).$
\end{center}

As $A^{n_1}\cap \partial^h\Sigma^{t,\Delta_0}=\emptyset$
(\ref{int}), $K^{n_1}\subset W^u_{X^{n_1}}(p)$ and 
$dim(K^{n_1}_1)=dim(B^u[0,\Delta_0])$,
we conclude that $dist(\Pi^t(\Sigma^{t,\Delta_0}\cap A^{n_1}),0)=0$,
and this last equality implies that
\begin{center}
	$A^{n_1}\cap l^t\neq\emptyset.$
\end{center}
Since $l^t\subset W^s_{X^{n_1}}(\sigma)$
and $A^{n_1}$ is closed invariant set for $X^{n_1}$
we conclude that $\sigma\in A^{n_1}$.
This is a contradiction, since by hypotheses we have that 
$A^n$ is non-singular for all $n\in \nat$ and the proof follows.

\end{proof}



\begin{proof}[Proof of Theorem \ref{thB}]

We prove the theorem by contradiction.
Let $X$ be a $C^1$ vector field
of a compact $n$-manifold $M$, $n\geq3$, $X \in {\cal X}^1(M)$. 
Let $\Lambda \in M$ be a sectional hyperbolic set of $X$.
Then, we suppose that
there is a sequence $(X^n)_{n\in \nat} \subset {\cal X}^1(M)$, 
$X^n\overset{C^1}{\to} X$ such that every $X^n$ exhibits 
$n$ attractors or repellers, with $n>n_0$. By Proposition \ref{le4} 
there is a neighborhood $ {\cal U} \subset {\cal X}^1(M)$ of $X$ 
and a neighborhood $U \subset M$ of $\Lambda$
such that the attractors in $U$ are finite for all $Y \in {\cal U}$.
Thus, we are left to prove only for the repeller case.
We denote by $R^n$ a repeller of $X^n$ in $\cap_{t>0}X_t^n(U)=\Lambda_{X^n}$. 
Since $\Lambda_{X^n}$ arbitrarily close to $\Lambda$ and since $R^n \in \Lambda_{X^n}$, 
$R^n$ also is arbitrarily close to $\Lambda$, 
we can assume that $L^n$ belongs to $\Lambda$ for all $n$.

Let $(R^n)_{n\in \nat}$ be the sequence of  
repellers contained in $\Lambda$. By the Lemma \ref{le3} we have that 
\begin{center}
	$Sing(X) \bigcap Cl(\cup_{n \in \nat} R^n) = \emptyset$
\end{center}
Then, we have that there is $\delta>0$, such that 
$B_\delta(Sing(X))\bigcap\left(\cup_{n\in \nat} R^n\right)=\emptyset.$

As in (\ref{bola}) we define
$H=\bigcap_{t\in I\!\! R}X_t\left(U\setminus B_{\delta/2}(Sing(X))\right)$.
It follows that $H$ is hyperbolic by Lemma \ref{l1}
and by the Lemma \ref{le2} we have that 
there is a neighborhood $ {\cal U} \subset {\cal X}^1(M)$ of $X$, 
a neighborhood $U \subset M$ of $H$, and $n_1 \in \mathbb{N} $ such that
\begin{center}
	 $\#\{R\subset U:$ $R$ is a repeller of $Y\in {\cal U}\} \leq n_1 \leq n_0$
\end{center}
for every vector field $Y \in {\cal U}$. This is a contradiction, since by hypotheses 
we have that
\begin{center}
$\#\{R\subset H:$ $R$ is a repeller of $Y\in {\cal U}\} \geq n>n_0$.
\end{center}

\end{proof}

\begin{center}
ACKNOWLEDGEMENTS	
\end{center}
The author would like to thank B. Santiago by useful conversations.


\medskip 

\flushleft
A. M. L\'opez B\\
Instituto de Matem\'atica, Universidade Federal do Rio de Janeiro\\
Rio de Janeiro, Brazil\\
E-mail: barragan@im.ufrj.br

\end{document}